 \newtheorem{theorem}{Theorem}[section]
 \newtheorem{corollary}[theorem]{Corollary}
 \newtheorem{lemma}[theorem]{Lemma}
\newtheorem{observation}[theorem]{Observation}
\theoremstyle{definition}
\theoremstyle{remark}
\newtheorem{fact*}{Fact}
\DeclareMathOperator{\Aut}{Aut}
\DeclareMathOperator{\divisor}{div}
\newcommand{\tr}{\operatorname{tr}}
\newcommand{\rk}{\operatorname{rk}}
\newcommand{\til}{\raise.17ex\hbox{$\scriptstyle\mathtt{\sim}$}}
\newcommand\beq{\begin{equation}}
\newcommand\eeq{\end{equation}}
\newcommand{\bbm}{\left[ \begin{smallmatrix}}
\newcommand{\ebm}{\end{smallmatrix} \right]}
\newcommand{\bpm}{\left( \begin{smallmatrix}}
\newcommand{\epm}{\end{smallmatrix} \right)}
\numberwithin{equation}{section}
\newlength{\Mheight}
\newlength{\cwidth}
\newcommand{\dfn}[1]{{\bf #1}\index{#1}}
\title[Free principal divisors]{Free noncommutative principal divisors and commutativity of the tracial fundamental group}
\author[J. E. Pascoe]{
J. E. Pascoe
}
\address{Department of Mathematics\\
1400 Stadium Rd\\
  University of Florida\\
 Gainesville, FL 32611}
\email[J. E. Pascoe]{pascoej@ufl.edu}
\date{\today}
\subjclass[2010]{Primary 46L54, 46L53 Secondary 32A70, 46E22}
\begin{document}

\begin{abstract}
We define the principal divisor of a free noncommuatative function. We use these divisors to compare the determinantal singularity sets of free noncommutative functions.
We show that the divisor of a noncommutative rational function is the difference of two polynomial divisors.

We formulate a nontrivial theory of cohomology, fundamental groups and covering spaces for tracial free functions.
We show that the natural fundamental group arising from analytic continuation for tracial free functions is a direct sum of copies of $\mathbb{Q}$.
Our results contrast the classical case, where the analogous groups may not be abelian, and the free case, where free universal monodromy implies 
such notions would be trivial.
\end{abstract}

\maketitle

\tableofcontents

\section{Introduction}
	Our first goal is to understand equality  the determinants of free noncommutative function and of their zero sets.
	Consider the two noncommutative polynomials:
		$$1+xy, 1+yx.$$
	Evidently, $\tr[(1+XY)^n] = \tr[(1+YX)^n]$ for all $n$ when $X$ and $Y$ are like-size matrices, and therefore $\det 1+XY = \det 1+YX.$
	(This is the Weinstein-Aronszajn identity, which is also known as the Sylvester determinant theorem.)
	Second, consider the two expressions:
		$$e^{x+y}, e^{x}e^{y}.$$
	Evidently, $\det e^{X+Y} = \det e^{X}e^{Y}$ when $X$ and $Y$ are like-size matrices, as generally $\det e^T=e^{\tr T},$ 
	and $$\det e^{X+Y} = e^{\tr X+Y} = e^{\tr X + \tr Y} = e^{\tr X}e^{\tr Y}= \det e^{X}e^{Y}.$$
	If one can write
		$e^Xe^Y = e^Z$ for some $Z$ 
	this implies $$Z=X+Y + \textrm{ commutators}$$
	which is the qualitative aspect of the Baker-Campbell-Hausdorff formula.

	We characterize the equality of determinantal zero sets in terms of \emph{principal divisors} corresponding to a
	noncommutative function. Divisors are concretely given by the logarithmic derivative $\det f$ which is itself
	naturally interpreted as a $g$-tuple of noncommutative functions which we will denote by $\divisor f$.

	We also analyze ``homotopy" and cohomological theory for such functions and their domains, motivated by the basic question: which functions are divisors?
	We define the tracial covering space to be the set of paths in a domain originating at some fixed base point which can be distinguished via analytic continuation,
	and show that the corresponding group generated by loops, the \emph{tracial fundamental group} is abelian.
	Prior developments in sheaf theory were made for free noncommutative functions in Klep, Volcic, Vinnikov \cite{KVVgerms}, in the study of noncommutative symmetric functions
	by Agler and Young \cite{aysym}, the noncommutative manifold theory Agler, McCarthy and Young \cite{AMYMan}, implicit function theorems of Agler and McCarthy \cite{AMImp},
	and free universal monodromy \cite{pascoeplush}.
	
	The search for topological and geometric invariants for free domains is independently motivated by the desire to understand their change of variables theory, which is well developed for 
	maps between convex sets \cite{HKMBook, hkms09, helkm11, augatPLMS, ahkmMann, ahkmjfa, pascoeMathZ}, which itself was motivated by simplifying dimension-free systems of matrix inequalities arising in
	systems engineering \cite{CHSY,DeOliveira2009, SIG}.

\section{Noncommuative functions and their divisors}
Define the \dfn{$d$-dimensional matrix universe} to be
	$$\mathcal{M}^d = \bigcup M_n(\mathbb{C})^d.$$
Say $D\subseteq \mathcal{M}^d$ is a \dfn{free set} if
\begin{enumerate}
	\item $X, Y \in D \Rightarrow X \oplus Y \in D,$
	\item $X \in D,$ $U$ unitary $\Rightarrow U^*XU \in D.$  (Here, $U^*(X_1, \ldots, X_d)U = (U^*X_1U,\ldots, U^*X_dU)$.)
\end{enumerate}
Let $D_n = D \cap M_n(\mathbb{C})^d.$
We say $D$ is \dfn{differentiable} if for each $n,$ $D_n$ is an open $C^1$ manifold such
that the complex span of the tangent space at any point $X\in D_n$ is equal to all of $M_n(\mathbb{C})^d$.
For example, the set of all $d$-tuples of self-adjoints is differentiable. 
We say $D$ is \dfn{open} if each $D_n$ is open. We say $D$ is \dfn{connected} if each $D_n$ is connected. We say $D$ is \dfn{simply  connected} if each $D_n$ is simply connected.
Let $D\subseteq \mathcal{M}^d$ be a free set. We say $f: D\rightarrow \mathcal{M}^{\hat{d} \times \hat{e}}$ is a \dfn{free function}
if
\begin{enumerate}
	\item $f(X \oplus Y) = f(X)\oplus f(Y)$
	\item $f(U^*XU) =U^*f(X)U$ for $U$ unitary.
\end{enumerate}
We view the outputs of a free function mapping into $\mathcal{M}^{\hat{d} \times \hat{e}}$ as block $\hat{d}$ by $\hat{e}$ matrices.
We say $f: D\rightarrow \mathbb{C}$ is a \dfn{determinantal free function}
if
\begin{enumerate}
	\item $f(X\oplus Y) = f(X) f(Y),$
	\item $f(U^*XU)=f(X)$ for $U$ unitary.
\end{enumerate}
We say a free function or determinantal free function is $C^1$ if it is defined on a differentiable free set $D$ and the function is $C^1$ on each $D\cap M_n(\mathbb{C}^d).$

We say $f: D\rightarrow \mathbb{C}$ is a \dfn{tracial free function}
if
\begin{enumerate}
	\item $f(X\oplus Y) = f(X)+ f(Y),$
	\item $f(U^*XU)=f(X)$ for $U$ unitary.
\end{enumerate}
Given a differentiable tracial free function $f$, we define the \dfn{free gradient} to be the unique free function $\nabla f$ such that
$$\tr H \cdot \nabla f(X) = Df(X)[H]$$
where $n$ is the size $X.$ Here, $(A_1, \ldots, A_d) \cdot (B_1, \dots, B_d) = \sum A_iB_i.$ Given a determinantal free function $f$, we may induce a tracial free function $\log {f}$ on its nonsingular set.
For any square free function $f,$ we may induce a determinantal free function $\det f.$   
Let $f$ be a differentiable determinantal free function such that $f\not\equiv 0.$ We define the \dfn{principal divisor} corresponding to $f,$ denoted $\divisor f $
by $$\divisor f = \nabla \log f.$$
That is, $\divisor f$ is
the unique free function $g:D\setminus \{X| f(X)=0\}\rightarrow \mathcal{M}^d$ such that
	$$D\log f (X)[H]=\tr \sum^d_{i=1} H_ig_i(X).$$
We overload the divisor notation for free functions: for a free function $f,$ $\divisor f$ is defined to be equal to $\divisor \det f.$
Note that
	$$\divisor fg = \divisor f + \divisor g.$$

The following essentially trivial set of observations is the reason we want to consider divisors.
\begin{observation} \label{critical}
	Let $f, g$ be a $C^1$ nonzero determinantal free functions.
	\begin{enumerate}
		\item There exists an invertible locally constant determinantal free function $c$ such that $f=cg$ if and only if $\divisor f = \divisor g,$
		\item $f/g$ has a $C^1$ extension to the whole domain if and only if there is a $C^1$ determinantal function $h$ on the domain such that  $\divisor f - \divisor g= \divisor h$,
		\item $f/g$ and $g/f$ have a $C^1$ extension to the whole domain if and only if $\divisor f - \divisor g$ has a continuous extension to the whole domain.
	\end{enumerate}
\end{observation}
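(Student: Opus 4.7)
All three statements follow from the same principle: the divisor is a complete invariant of a determinantal free function modulo invertible locally constant factors. I would prove the parts in the stated order.

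For part (1), the forward direction is immediate from the log-linearity of the divisor: if $f=cg$ with $c$ locally constant and invertible, then locally $\log f=\log c+\log g$, so $\divisor f=\nabla\log c+\divisor g=\divisor g$. For the converse, $\divisor f=\divisor g$ on the common open domain $D\setminus(Z_f\cup Z_g)$ gives $\nabla\log(f/g)=0$ there, so $f/g$ is locally constant and nonzero, and the determinantal and unitary-invariance properties pass from $f,g$ to $f/g$. To extend it across the zero sets one must first argue $Z_f=Z_g$: if $X_0\in Z_f\setminus Z_g$ were approachable from $D\setminus(Z_f\cup Z_g)$, then continuity would force the locally constant value of $f/g$ to be $0$, a contradiction; the symmetric argument handles $Z_g\setminus Z_f$. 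With $Z_f=Z_g$, the locally constant $c=f/g$ has a continuous extension across the common zero set.

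Part (2) reduces to part (1) applied to $f$ and $gh$. The hypothesis $\divisor f-\divisor g=\divisor h$ is equivalent to $\divisor f=\divisor(gh)$; by (1) there is an invertible locally constant $c$ with $f=c\cdot gh$, so $f/g=ch$ is a $C^1$ extension to all of $D$. Conversely, any $C^1$ extension $h$ of $f/g$ inherits the determinantal property by continuity on a dense open set, and its divisor agrees with $\divisor f-\divisor g$ where both are defined.

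For part (3), the forward direction is similar: $C^1$ extensions $h,k$ of $f/g$ and $g/f$ satisfy $hk=1$ on a dense open set and hence on $D$, so $h$ is invertible $C^1$, and $\divisor h$ is a continuous extension of $\divisor f-\divisor g$. The converse is the key delicate step: given a continuous extension $\phi$ of $\divisor f-\divisor g$ to $D$, I would show that $\log|f/g|$ remains locally bounded near $Z_f\cup Z_g$ by noting its gradient is controlled by $\phi$ and integrating along short paths in $D_n\setminus(Z_f\cup Z_g)$ from a reference point. Since $\phi$ is continuous and hence bounded on compacta, this forces $f/g$ and $g/f$ to remain bounded away from $0$ and $\infty$ near the zero set; both thus extend continuously, with $C^1$ regularity following from the chain-rule identity $D(f/g)(X)[H]=(f/g)(X)\,\tr H\cdot\phi(X)$.

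The main obstacle in parts (1) and (3) is the cross-zero-set extension argument. One must verify that $Z_f\cup Z_g$ is sufficiently tame inside each fiber $D_n$---in particular, that its complement remains dense with paths of bounded length reaching any point of $Z_f\cup Z_g$---so that local constancy or bounded-gradient control on the complement forces a continuous extension. Under the $C^1$ hypothesis on $f,g$ together with the analyticity implicit in the free determinantal structure, the zero sets behave as real codimension-$\ge 1$ subvarieties of $D_n$, which is enough for the arguments to go through.
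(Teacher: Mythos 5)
The paper states this result as an ``essentially trivial'' observation and supplies no proof at all, so there is no official argument to compare against; your reconstruction is the natural one and, as far as I can tell, the intended one. The skeleton is right: (1) follows because $\divisor f=\divisor g$ means $\nabla\log(f/g)=0$ on $D\setminus(\mathcal{Z}(f)\cup\mathcal{Z}(g))$, forcing $f/g$ to be locally constant there, with $\mathcal{Z}(f)=\mathcal{Z}(g)$ extracted from continuity; (2) reduces to (1) by rewriting $\divisor f-\divisor g=\divisor h$ as $\divisor f=\divisor(gh)$; and (3) follows by integrating the continuous extension $\phi$ of $\nabla\log(f/g)$ along short paths to see that $\log(f/g)$ stays bounded and Cauchy near the zero set, then recovering $C^1$ regularity from $D(f/g)[H]=(f/g)\,\tr H\cdot\phi$.

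The one point that genuinely needs care --- and you correctly isolate it in your final paragraph rather than burying it --- is the tameness of $\mathcal{Z}(f)\cup\mathcal{Z}(g)$ inside each $D_n$: you need its complement to be dense, and for (1) and (3) you need nearby points of the complement to be joinable by short paths avoiding the zero set, so that local constancy, respectively the Cauchy estimate on $\int_\gamma\phi$, actually produces a well-defined extension (and, in (1), so that the locally constant $c$ extends across the common zero set rather than jumping between components it separates). For a bare $C^1$ determinantal function this can fail: a $C^1$ zero set can have interior or can locally disconnect $D_n$. So strictly speaking the observation requires either an analyticity hypothesis or the implicit convention that the determinantal functions in play arise as $\det$ of analytic free functions, in which case the zero sets are analytic hypersurfaces of real codimension two and every step you wrote goes through. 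Since the paper itself elides exactly this point, your proposal is correct at the paper's own level of rigor, with the remaining caveat honestly flagged rather than overlooked.
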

We note that the technique of viewing matrix identities, in our case coming from equality of principal divisors, as derivatives of determinantal identities was previously use by Terrence Tao in \cite{tao_2013}.

For example, $\divisor e^Xe^Y = \divisor e^X + \divisor e^Y = (1,1).$
Also note, $\divisor e^{X+Y} = (1,1).$ Therefore, $\divisor e^Xe^Y = \divisor e^{X+Y}.$
Notably, this gives a proof of the qualitative aspect of the Baker-Campbell-Hausdorff formula used in Lie theory, the difference between $\log e^Xe^Y$ and $\log e^{X+Y}$ has trace zero and is thus a sum of commutators.

Computing the divisor of a general free function is made possible via the following computation.
\begin{theorem}
	Let $f: D\rightarrow \mathcal{M}^{\hat{d} \times \hat{d}}$ be a $C^1$ free function such that $\det f \not\equiv 0$.
	$\tr H\cdot \divisor f = \tr Df(X)[H]f(X)^{-1}.$
\end{theorem}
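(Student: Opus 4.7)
The plan is to unfold the two layered definitions (divisor and logarithmic derivative of $\det$) and then reduce the identity to Jacobi's formula together with the chain rule.

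First, I would rewrite the left-hand side. By definition, $\divisor f = \divisor \det f = \nabla \log \det f$, and by the defining property of the free gradient of the tracial function $\log \det f$,
\begin{equation*}
\tr H \cdot \divisor f(X) \;=\; D(\log \det f)(X)[H].
\end{equation*}
So the entire theorem reduces to showing
\begin{equation*}
D(\log \det f)(X)[H] \;=\; \tr\bigl(Df(X)[H]\, f(X)^{-1}\bigr).
\end{equation*}

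Next I would apply the ordinary (matrix-level, scalar-valued) chain rule at the scalar matrix entry $X \in D_n$, which is legitimate since $f$ is assumed $C^1$ on each slice and $\det f(X) \neq 0$ on the domain of $\divisor f$. The chain rule gives
\begin{equation*}
D(\log \det f)(X)[H] \;=\; \frac{1}{\det f(X)}\, D(\det f)(X)[H] \;=\; \frac{1}{\det f(X)}\, D(\det)\bigl(f(X)\bigr)\bigl[Df(X)[H]\bigr].
\end{equation*}
Then I invoke Jacobi's formula: for any invertible matrix $A$ and any $B$ of the same size,
\begin{equation*}
D(\det)(A)[B] \;=\; \det(A)\,\tr\bigl(A^{-1}B\bigr).
\end{equation*}
Applying this with $A = f(X)$ and $B = Df(X)[H]$ and cancelling the $\det f(X)$ factor yields $\tr\bigl(f(X)^{-1} Df(X)[H]\bigr)$, which equals $\tr\bigl(Df(X)[H]\, f(X)^{-1}\bigr)$ by cyclicity of the trace.

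There is essentially no obstacle beyond bookkeeping, since all inputs are either the paper's definitions or classical single-variable matrix calculus; the only subtlety is making sure one is in the regime where $f(X)$ is invertible, which is exactly the domain on which $\divisor f$ is defined, and that one reads the free gradient identity $\tr H \cdot \nabla g(X) = Dg(X)[H]$ in the scalar sense on each fiber $D_n$. Once the identity is established pointwise on each $D_n$ with $\det f(X) \neq 0$, the conclusion is the desired equality of free functions.
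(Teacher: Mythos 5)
Your proposal is correct and follows essentially the same route as the paper: unfold $\divisor f = \nabla\log\det f$ via the free-gradient identity, apply the chain rule, and invoke Jacobi's formula (the paper states it in the adjugate form $D\det A[K]=\tr K\,\mathrm{adj}\,A$, which for invertible $A$ is your $\det(A)\tr(A^{-1}B)$). No substantive difference.
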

\begin{proof}
Jacobi's formula states that the derivative of the determinant  of $\det A$ is given by:
	$D \det A [K] = \tr K\textrm{adj } A.$
Now we compute $D \det f(X) [H]$ using the chain rule
	$$D \log\det f(X)[H] = \frac{1}{\det f(X)}\tr Df(X)[H]\textrm{adj } f(X) = \tr Df(X)[H]f(X)^{-1}.$$
\end{proof}
Consider $\divisor 1+XY.$ Note
	\begin{align*} 
	\tr H \cdot \divisor 1+XY & = \tr (H_1Y+XH_2)(1+XY)^{-1} \\
	& = \tr H_1Y(1+XY)^{-1}+H_2(1+XY)^{-1}X
	\end{align*}
Therefore, $$\divisor 1+XY = (Y(1+XY)^{-1},(1+XY)^{-1}X).$$
A similar computation gives that,
	$$\divisor 1+YX = ((1+YX)^{-1}Y,X(1+YX)^{-1}).$$
We now show $\divisor 1+XY =  \divisor 1+YX.$ 
Note $Y(1+XY) = (1+YX)Y,$ so $Y(1+XY)^{-1} = (1+YX)^{-1}Y.$ The argument for the second coordinate is similar.

\section{Divisors of polyonmials and rational functions}
A \dfn{free polynomial} is a syntactically valid expression in terms of some noncommuting indeterminants, $+,$ multiplication and scalar coefficients.
For example,
	$$1-xy, xy-yx, 37zxy+8xyxz, x+y$$
are all free polynomials. 
A \dfn{square matricial free polynomial} is a square matrix with free polynomial entries.
For example,
	$$\bpm 1-xy & xy-yx \\ 37zxy+8xyxz & x+y\epm,$$
is a square matricial free polynomial.
A \dfn{free rational expression} is a syntactically valid expression in terms of some noncommuting indeterminants, $+,$ multiplication, parentheses, inverses and scalar coefficients.
A rational expression $r$ is \dfn{nondegenerate} if there exists at least one $X \in \mathcal{M}^d$ such that $r(X)$ is well-defined.
For example,
	$$(xy-xy)^{-1}, (x^2y+78(xy-yx)^{-1}), (y(1-xy)^{-1}-(1-yx)^{-1}y)^{-1}$$
are all free rational expressions, but $(y(1-xy)^{-1}-(1-yx)^{-1}y)^{-1}$ is not nondegenerate.
A \dfn{square matricial rational expression} is a square matrix with free rational expression entries.
For any nondegenerate rational expression there exists an affine linear square matricial polynomial $L$ and constant rectangular $b, c$
such that $r=b^*L^{-1}c$ and $L^{-1}$ is well-defined whereever the expression $r$ was defined \cite{PMCohn}. (Note that this may not be the maximal domain the function continues to \cite{volcic}.)
\begin{theorem}
	Let $f, g$  square matricial free polynomials such that $\det f,\det g \not\equiv 0$.
	Suppose $\det f / \det g$ and $\det g/\det f$ are entire.
	Then,
		$\divisor f = \divisor g.$
\end{theorem}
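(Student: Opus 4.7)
The plan is to show, at each fixed matrix level $n$, that $\det f$ and $\det g$ differ by a multiplicative constant; then use the direct-sum property of determinantal free functions to pin down how this constant depends on $n$; and finally observe that an $X$-independent constant is killed by the free gradient.

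For the first step, fix $n$ and regard $\det f(X)$ and $\det g(X)$ as ordinary polynomials on $M_n(\mathbb{C})^d \cong \mathbb{C}^{n^2 d}$. The hypothesis that $\det f/\det g$ and $\det g/\det f$ are entire means, at level $n$, that each of these meromorphic functions extends holomorphically to all of $\mathbb{C}^{n^2 d}$. In the UFD $\mathbb{C}[z_1,\ldots, z_{n^2 d}]$, factor $\det f$ and $\det g$ into irreducibles. The ratio $\det f / \det g$ being holomorphic on $\mathbb{C}^{n^2 d}$ forces $\det g \mid \det f$ in the polynomial ring: otherwise some irreducible factor of $\det g$ appearing to a higher power than in $\det f$ would contribute a codimension-one pole locus, since the reduced numerator does not vanish generically there. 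Applying the same argument to the reciprocal gives $\det f \mid \det g$. Hence there exists $c_n \in \mathbb{C}^*$ with $\det f(X) = c_n \det g(X)$ on $M_n(\mathbb{C})^d$.

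For the second step, the determinantal identities $\det f(X \oplus Y) = \det f(X)\det f(Y)$ and the analogous one for $g$, combined with $\det f = c_{m+n}\det g$ at level $m+n$, force $c_{m+n} = c_m c_n$. Consequently $c_n = c^n$ where $c = c_1$.

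For the third step, at level $n$ we have
\[
\log\det f(X) = n\log c + \log\det g(X).
\]
The term $n\log c$ does not depend on $X$ and is therefore annihilated by $\nabla$, so $\divisor f = \nabla\log\det f = \nabla\log\det g = \divisor g$. The only real content is the first step, which is the standard fact that a ratio of polynomials on $\mathbb{C}^N$ is entire only when the denominator (in lowest terms) is a nonzero constant; the rest is bookkeeping with the direct-sum axiom.
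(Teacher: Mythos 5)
Your proposal is correct and follows essentially the same route as the paper: reduce to each matrix level, use the fact that a ratio of polynomials on $\mathbb{C}^{n^2d}$ that is entire in both directions must be a nonzero constant, and then note that a (locally) constant multiple is annihilated when passing to $\nabla\log$. Your steps two and three simply unpack the paper's appeal to Observation 2.1(1), verifying explicitly that the constants $c_n$ assemble into a locally constant determinantal free function and that such a factor does not affect the divisor.
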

\begin{proof}
	Note that, as functions on $M_n(\mathbb{C})$ $\det f / \det g$ and $\det g/\det f$ are polynomials and reciprocals, and hence constant. Therefore, $\divisor f = \divisor g$
	by Observation \ref{critical}.
\end{proof}
\begin{lemma}
	Let $r$ be a nondegenerate square matricial free rational expression.
	There exist square matricial free polynomials $p, q$ such that
		$$\det r= \det p / \det q = \det pq^{-1}.$$
\end{lemma}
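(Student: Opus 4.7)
The plan is to leverage the realization $r=b^{*}L^{-1}c$ cited in the paragraph immediately preceding the lemma, together with a classical Schur-complement determinantal identity, to explicitly construct $p$ and $q$ as block matrices whose entries are affine linear free polynomials.

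First, I would apply that realization to the nondegenerate $k\times k$ matricial rational expression $r$ to produce an $N\times N$ affine linear square matricial free polynomial $L$, together with rectangular constant matrices $b$ and $c$ of size $N\times k$, such that $r=b^{*}L^{-1}c$ and $L^{-1}$ is well-defined at every point where $r$ is defined.

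Next, I would take the $(N+k)\times(N+k)$ block affine linear square matricial free polynomials
\[ p=\begin{pmatrix} L & c \\ -b^{*} & 0\end{pmatrix},\qquad q=\begin{pmatrix} L & 0 \\ 0 & I_{k}\end{pmatrix}. \]
A direct block LU factorization valid wherever $L$ is invertible gives
\[ p=\begin{pmatrix} I & 0 \\ -b^{*}L^{-1} & I\end{pmatrix}\begin{pmatrix} L & c \\ 0 & b^{*}L^{-1}c\end{pmatrix}, \]
so that $\det p=\det L\cdot\det(b^{*}L^{-1}c)=\det q\cdot \det r$ throughout the domain of $r$. Rearranging yields $\det r=\det p/\det q=\det(pq^{-1})$, as claimed.

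The main obstacle, and essentially the only nonroutine step, is the invocation of the realization: one must rely on the cited Cohn-type fact that a nondegenerate matricial free rational expression $r$ admits $L$, $b$, $c$ with $L^{-1}$ defined exactly on the domain of $r$, so that the Schur-complement manipulation is valid on the full domain and the identity $\det r=\det p/\det q$ holds as an equality of rational functions on that domain. Granting that input, the construction of $p$ and $q$ and the verification are just block linear algebra.
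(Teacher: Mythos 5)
Your proposal is correct and follows essentially the same route as the paper: it uses the same Cohn realization $r=b^{*}L^{-1}c$, the same matrix $p=\bpm L & c\\ -b^{*} & 0\epm$, and a denominator with $\det q=\det L$, verifying the identity by a Schur-complement/block-triangular factorization just as the paper does. The only cosmetic difference is that you pad $q$ to $L\oplus I_k$ so that $p$ and $q$ have equal size, whereas the paper simply takes $q=L$; both give the same determinant.
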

\begin{proof}
	Let $r = b^*L^{-1}c,$ where $L$ is affine linear.
	Now consider
		\begin{align*}\det \bpm L & c \\ -b^* & 0 \epm /\det L &= \det \bpm L & c \\ -b^* & 0 \epm \det \bpm L^{-1} & 0 \\ 0 & 1 \epm \\
		&= \det \bpm 1 & c \\ -b^*L^{-1} & 0 \epm\\
		&= \det b^*L^{-1}c
		\end{align*}
	where the last line uses the fact that $\det \bpm A & B \\ C & D \epm = \det A \det D-CA^{-1}B.$
	Therefore, $$\det r= \det \bpm L & c \\ -b^* & 0 \epm /\det L.$$
		
\end{proof}

As a corollary of the preceeding lemma, we see the following theorem as an immediate corollary. Determinantal singularity and polar sets of free rational functions are no more complicated 
than those of free polynomials.
\begin{theorem} \label{ratl}
	Let $r$ be a nondegenerate square matricial free rational expression such that $\det r \not\equiv 0$.
	There exist square matricial free polynomials $p, q$ such that 
		$$\divisor r = \divisor p - \divisor q.$$
\end{theorem}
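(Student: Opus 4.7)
The theorem is advertised as an immediate corollary of the preceding lemma, and my plan follows that suggestion. The lemma supplies square matricial free polynomials $p, q$ with
$$\det r = \det p / \det q,$$
valid as determinantal free functions on the common domain. The strategy is simply to transport this determinantal identity through the logarithm and free gradient to obtain the divisor identity.

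First, I would invoke the lemma to fix $p$ and $q$ with $\det r = \det p / \det q$. On the open free set where none of $\det r, \det p, \det q$ vanish, this becomes
$$\log \det r = \log \det p - \log \det q,$$
as tracial free functions (both sides are well-defined up to a locally constant integer multiple of $2\pi i$, which is killed by differentiation). Taking the free gradient and using the definition $\divisor f = \nabla \log \det f$ for a square free function, linearity of $\nabla$ gives
$$\divisor r = \nabla \log \det p - \nabla \log \det q = \divisor p - \divisor q$$
on this common open set.

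Alternatively, and perhaps more cleanly, I would rephrase via the product rule $\divisor fg = \divisor f + \divisor g$ already stated in the text. The lemma's identity $\det r = \det(pq^{-1})$ means that $r$ and $pq^{-1}$ have the same determinant as determinantal free functions, so by part (1) of Observation \ref{critical} (trivially, since the ratio is identically $1$) one has $\divisor r = \divisor(pq^{-1})$. Then $\divisor(pq^{-1}) = \divisor p + \divisor q^{-1}$, and applying the product rule once more to $\divisor(q \cdot q^{-1}) = \divisor 1 = 0$ gives $\divisor q^{-1} = -\divisor q$, completing the identity.

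There is essentially no obstacle here; the only thing to watch is domain bookkeeping. The equality $\divisor r = \divisor p - \divisor q$ is an equality of free functions defined a priori on the intersection of the domains of $r$, $p$, and $q$ with the nonvanishing loci of $\det p$ and $\det q$. Since $r$ is nondegenerate and $\det r \not\equiv 0$, this intersection is nonempty and free, which is all that is needed for the statement as written.
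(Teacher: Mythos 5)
Your proposal is correct and matches the paper's intent exactly: the paper offers no separate argument, presenting the theorem as an immediate corollary of the lemma's identity $\det r = \det p/\det q$ together with the additivity $\divisor fg = \divisor f + \divisor g$, which is precisely the route you take. The logarithm/free-gradient passage and the domain bookkeeping you spell out are the details the paper leaves implicit.
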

For example, consider Schur complement $D-BA^{-1}C$ as a rational function.
Note $$\det \bpm A & B \\ C & D \epm = \det A \det D-CA^{-1}B.$$
That is, $$\divisor D-CA^{-1}B=\divisor \bpm A & B \\ C & D \epm - \divisor A.$$
It is somewhat interesting to explicitly compute the divisors here and equate like terms, so we shall do so.
First,
	$$\divisor D-CA^{-1}B = \bpm
		A^{-1}B(D-CA^{-1}B)^{-1}CA^{-1} &
		-A^{-1}C(D-CA^{-1}B)^{-1}\\
		-(D-CA^{-1}B)^{-1}A^{-1}B &		
		(D-CA^{-1}B)^{-1}	
	\epm$$
Second, note that
	$$
		\tr D\bpm A & B \\ C & D \epm\bbm H_{11} & H_{12} \\ H_{21} & H_{22} \ebm \bpm A & B \\ C & D \epm^{-1} =
		\tr \bpm H_{11} & H_{12} \\ H_{21} & H_{22} \epm\bpm A & B \\ C & D \epm^{-1}.
	$$
Therefore,
	$$ \divisor \bpm A & B \\ C & D \epm =
		\bpm A & B \\ C & D \epm^{-1}$$
Finally, $$\divisor A = \bpm A^{-1} & 0 \\0 &0\epm.$$
Equating like terms, we see that 
	$$\bpm A & B \\ C & D \epm^{-1} = \bpm
		A^{-1} + A^{-1}B(D-CA^{-1}B)^{-1}CA^{-1}&
		-A^{-1}C(D-CA^{-1}B)^{-1}\\
		-(D-CA^{-1}B)^{-1}A^{-1}B &		
		(D-CA^{-1}B)^{-1}	
	\epm$$
which is the classical block two by two matrix inversion formula.

Let $D$ be a free set.
Let $f$ be a $C^1$ determinantal free function defined on a dense subset of $D$.
We say $\divisor f$ is \dfn{effective} if $f$ extends to a $C^1$ function on all of $D.$
Say $\divisor f \preceq \divisor g$ if $\divisor f - \divisor g$ is effective.
Define the zero set of a free function $f$ to be $\mathcal{Z}(f)=\{X\in D| \det f=0\}.$

Say a square matricial free polynomial $p$ is \dfn{atomic} if $\det p \not\equiv 0$ and if $p_1p_2 = p,$ then either $\det p_1$ or $\det p_2$ is locally constant. 
Helton, Klep and Vol\v{c}i\v{c} show that if $q = p_1\ldots p_n$ where each $p_i$ is an atomic square matricial free polynomial, and $p$ is another atomic square
matricial free polynomial such that $\mathcal{Z}(p)\subseteq\mathcal{Z}(q),$ then $\det p = \det cp_i$ for some $i$ and some nonzero constant $c$ \cite[Theorem 2.12]{hkvfactor}.
That is, up to equivalence of principal divisors, the factorization of $q$ is unique.
Combining this with Theorem \ref{ratl}, we see that the additive structure of the rational divisors is free abelian, generated by the atomic matricial square free polynomials.  
\begin{corollary}
	The set of principal divisors of nondegenerate square matricial free rational expressions such that $\det r \not\equiv 0$ with addition is a free abelian group.
\end{corollary}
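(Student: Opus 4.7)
The plan is to produce an explicit free generating set for the group and then reduce the statement to two inputs from earlier in the paper: Theorem \ref{ratl}, which expresses every rational divisor as a difference of polynomial divisors, and the Helton--Klep--Vol\v{c}i\v{c} unique factorization theorem cited just above.

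First I would verify that the set of divisors really forms an abelian group. Closure is immediate from $\divisor (r_1 \oplus r_2) = \divisor r_1 + \divisor r_2$, and inverses come from the observation that $r^{-1}$ is itself a nondegenerate square matricial rational expression with $\divisor r^{-1} = -\divisor r$. I would then fix a set $\mathcal{A}$ containing one representative of each equivalence class of atomic square matricial free polynomials of nonzero divisor, where $p \sim q$ iff $\divisor p = \divisor q$; by Observation \ref{critical}(1) this is the same as saying $p$ and $q$ differ by an invertible locally constant determinantal factor.

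To show $\{\divisor p : p \in \mathcal{A}\}$ generates the group, I would apply Theorem \ref{ratl} to reduce to polynomial divisors, and then iteratively factor any square matricial polynomial of nonvanishing determinant into atomic pieces. This process terminates because a nontrivial split $p = p_1 p_2$ with both factors of non-constant determinant strictly decreases $\deg \det$. Additivity of $\divisor$ under multiplication then writes $\divisor p$ as a nonnegative integer combination of atomic divisors, and their differences exhaust all rational divisors.

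The main obstacle is linear independence. I would argue by contradiction: assume $\sum_i n_i \divisor p_i = 0$ nontrivially with distinct $p_i \in \mathcal{A}$, split into positive and negative coefficients, and set $P = \prod_{n_i > 0} p_i^{n_i}$ and $Q = \prod_{n_j < 0} p_j^{|n_j|}$, so that $\divisor P = \divisor Q$. When both products are nontrivial, Observation \ref{critical}(1) gives $P = cQ$ for an invertible locally constant determinantal $c$, so $\mathcal{Z}(P) = \mathcal{Z}(Q)$; applying Helton--Klep--Vol\v{c}i\v{c} to any atomic factor $p_{i_0}$ of $P$ together with the atomic factorization of $Q$ produces some $q_{j_0}$ with $\det p_{i_0} = \det(c' q_{j_0})$, and hence $p_{i_0} \sim q_{j_0}$ in $\mathcal{A}$, contradicting the separation into positive and negative sides. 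The degenerate case where one of $P, Q$ is trivial is excluded by a degree count on $\det$, since a nontrivial product of atomics of nonzero divisor has strictly positive $\deg \det$. The delicate point throughout is aligning the Helton--Klep--Vol\v{c}i\v{c} conclusion (phrased in terms of $\det$) with the divisor equivalence relation defining $\mathcal{A}$, and Observation \ref{critical}(1) is precisely the bridge that makes this translation work.
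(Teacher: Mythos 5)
Your proposal is correct and takes essentially the same approach as the paper, which obtains the corollary by combining Theorem \ref{ratl} with the Helton--Klep--Vol\v{c}i\v{c} unique factorization theorem so that the atomic square matricial polynomial divisors form a free generating set. Your write-up merely fills in the details (group axioms, termination of atomic factorization, and the linear independence argument via zero sets) that the paper leaves implicit.
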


Moreover, we conclude the following theorem about inclusion of zero sets.
\begin{theorem}
	Let $p, q$ be square matricial free polynomials, viewed as entire functions, such that $\det p,\det q \not\equiv 0$.
	There exists an $n\in \mathbb{N}$ such that $\divisor p \preceq n \divisor q$
	if and only if
	$$\mathcal{Z}(p)\subseteq\mathcal{Z}(q).$$
\end{theorem}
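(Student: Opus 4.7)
The plan is to leverage the factorization result of Helton, Klep, and Vol\v{c}i\v{c} (cited as \cite[Theorem 2.12]{hkvfactor}) together with the free-abelian structure of rational divisors established in the preceding corollary.

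First, for the easier direction, I would assume $\divisor p \preceq n \divisor q$ for some $n \in \mathbb{N}$. By Observation \ref{critical}, this is equivalent to the rational determinantal quotient $(\det q)^n/\det p$ admitting a $C^1$ extension to all of $\mathcal{M}^d$. If there were a point $X$ with $\det p(X) = 0$ and $\det q(X) \neq 0$, then $(\det q)^n/\det p$ would be unbounded near $X$ and could not extend continuously. This contradiction forces $\mathcal{Z}(p) \subseteq \mathcal{Z}(q)$.

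For the converse, I would factor $p = p_1 \cdots p_m$ and $q = q_1 \cdots q_k$ into atomic square matricial free polynomials, which gives $\mathcal{Z}(p) = \bigcup_j \mathcal{Z}(p_j)$ and $\divisor p = \sum_j \divisor p_j$, and similarly for $q$. The hypothesis $\mathcal{Z}(p) \subseteq \mathcal{Z}(q)$ yields $\mathcal{Z}(p_j) \subseteq \mathcal{Z}(q_1 \cdots q_k)$ for every $j$. Applying \cite[Theorem 2.12]{hkvfactor} to each atomic $p_j$ produces an index $i(j)$ and a nonzero constant $c_j$ with $\det p_j = c_j \det q_{i(j)}$, and in particular $\divisor p_j = \divisor q_{i(j)}$. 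Regrouping by the value of $i(j)$, we obtain $\divisor p = \sum_i n_i \divisor q_i$ where $n_i := \#\{j : i(j) = i\}$. Choosing $n := \max_i n_i$ (or any larger integer), the combination $n \divisor q - \divisor p = \sum_i (n - n_i) \divisor q_i$ is a nonnegative integer combination of the effective divisors $\divisor q_i$, so it is the divisor of the polynomial $\prod_i (\det q_i)^{n - n_i}$ and therefore effective. This is exactly $\divisor p \preceq n \divisor q$.

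The crux of the argument is the converse direction, where the purely set-theoretic containment $\mathcal{Z}(p) \subseteq \mathcal{Z}(q)$ must be promoted to a rigid algebraic identification of each atomic factor of $p$ with some atomic factor of $q$. This is the step that genuinely requires the HKV theorem, which functions as a combined Nullstellensatz and uniqueness-of-factorization package for square matricial free polynomials; without it, even the existence of a suitable $n$ would be entirely unclear. Once that black box is invoked, the remainder of the argument is bookkeeping in the free-abelian divisor group.
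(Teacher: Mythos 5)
Your proof is correct and takes essentially the same route as the paper: both directions hinge on the Helton--Klep--Vol\v{c}i\v{c} atomic factorization theorem, with the forward implication being the elementary unboundedness observation. Your write-up is in fact more explicit than the paper's sketch (which reduces to atomic $p$ ``without loss of generality''), since you carry out the regrouping of atomic factors that explains why the multiplier $n$ is needed at all.
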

\begin{proof}
	Without loss of generality, we will assume $p$ is atomic.
	Write $\divisor q = \sum n_i\divisor p_i$  where $p_i$ are atomic, $n_i \in \mathbb{N}.$
	Note that $$\mathcal{Z}(q)=\mathcal{Z}(p_1p_2\ldots p_n).$$
	The claim is that $p \preceq q$ is equivalent to saying that one of the $\divisor p_i$ is equal to $\divisor p,$ which is essentially the 
	Helton-Klep-Vol\v{c}i\v{c} theorem stated above from  \cite[Theorem 2.12]{hkvfactor}.
\end{proof}

\section{The tracial fundamental group}\label{tracialfund}

Say an analytic free map $g: D \rightarrow \mathcal{M}^d$ is \dfn{closed} if $ \tr K\cdot Dg(X)[H] = \tr H\cdot Dg(X)[K].$
Say a free map $g: D \rightarrow \mathcal{M}^d$ is \dfn{exact} if  there exists a function $f$ such that $ \nabla f = g.$
We define the \dfn{first tracial free cohomology group} $H^1_{\tr}(D)$ to be the vector space of closed free functions modulo the exact free functions.


Let $D$ be a connected open free set.
Say $D$ is \dfn{anchored} if it contains a nonempty open simply connected free subset $B.$ (We call $B$ an \dfn{anchor} of $D.$)
We say tracial free function $f: B \rightarrow \mathbb{C}$ is a \dfn{global germ} if it analytically continues along every path in $D.$
Let $X, Y \in D.$
We say a path $\gamma$ \dfn{essentially takes} $X$ to $Y$ whenever 
\begin{enumerate}
	\item $\gamma(0) = X^{\oplus k},$ for some $k\in \mathbb{N},$
	\item $\gamma(1) = Y^{\oplus l},$ for some $l\in \mathbb{N}.$
\end{enumerate}
For set theoretic reasons, we view a path coupled with its terminal endpoint $Y,$ as there might be multiple ways to break down $Y^{\oplus l}$
as a direct sum.
Given a global germ $f$ and $X\in B$ an anchor, $\gamma$ a path essentially taking $X$ to $Y$, we can define $f(\gamma)$ to be the value
of $f$ analytically continued along $\gamma$ multiplied by the normalizing factor $\frac{1}{l}.$
Given two paths $\gamma_1$ essentially taking $X$ to $Y$, $\gamma_2$ essentially taking $Y$ to $Z,$ we define their \dfn{concatenation product} $\gamma_2\gamma_1$ to be 
	$$\gamma_2\gamma_1 (t) = 
	\begin{cases}
                                   \gamma_1^{\oplus m_2}(2t) & \text{if $0\leq t<1/2$} \\
                                   \gamma_2^{\oplus m_1}(2t-1) & \text{if $1/2 \leq t \leq 1$}
  	\end{cases}$$
where $m_1$ and $m_2$ are chose to make the sizes compatiable.

Say two paths $\gamma_1, \gamma_2$ essentially taking $X\in B$ an anchor to $Y$ in $D$ are \dfn{trace equivalent} if
	\begin{enumerate}
		\item $\gamma_1$ and $\gamma_2$ both essentially take $X$ to $Y$
		\item for
		every global germ $f$ and path $\gamma_3$ essentially taking $Y$ to $Z$, $f(\gamma_3\gamma_1) = f(\gamma_3\gamma_2).$
	\end{enumerate}
We let $\gamma^X$ denote the trivial path from $X$ to $X.$
Note that as a path essentially taking $X$ to $Y,$  the path $\gamma^{\oplus m}$ is equivalent to $\gamma.$ (Any two endpoint homotopic paths will also be trace equivalent.)

Given an anchored free set $D,$ fix an anchor $B$ and an 
\dfn{anchor point} $X \in B.$
We define the \dfn{tracial fundamental group} $\pi_1^{\tr}(D)$ to be the set of trace equivalence classes of paths essentially taking $X$ to $X.$
(As a group, the definition of the fundamental group is independent of our choice of $X$ because $D$ is connected.)

We define the \dfn{tracial covering space} $C^{\tr}(D)$ to be the set paths up to trace equivalence essentially taking $X$ to a point $Y \in D.$

There is a natural covering map $\rho:C^{\tr}(D) \rightarrow D$ taking a path to its terminal endpoint $Y.$
There is a natural inclusion map $\iota: B \rightarrow C^{\tr}(D)$ taking $Y \in B$ to a path in $B$ essentially taking $X$ to $Y.$
That is, we view an anchor $B \subseteq D$ as a subset of $C^{\tr}(D)$ by identifying it with the set of points over $B$ in the component of the trivial path.
The tracial fundamental group acts naturally on the tracial covering space.
Any global germ induces an analytic function on the tracial covering space.

We define a distance function $d:C^{\tr}(D)\times C^{\tr}(D) \rightarrow \mathbb{R}\cup\{+\infty\}.$
Let $\gamma_1$ be path essentially taking $X$ to $Y$ and $\gamma_1$ be path essentially taking $X$ to $Z.$
We set $d(\gamma_1,\gamma_2)=\infty$ if $Y$ and $Z$ are different sizes.
Let $\Gamma_{Y,Z}$ be the set of paths taking $Y$ to $Z.$ (Specifically, this is a path in $D_n$ where $n$ is the size of $Y$ and $Z.$) Let $\|\gamma\|$ be the length of $\gamma.$
We set $d(\gamma_1,\gamma_2)=\inf_{\gamma\in \Gamma_{Y,Z}, \gamma\gamma_1 = \gamma_2} \|\gamma\|.$

Given $g: D \rightarrow \mathcal{M}^d$ which is closed, we can induce a function $f:C^{\tr}(D)\rightarrow \mathbb{C}$ by first defining an $f$ on $B$
and then defining $f$ globally by analytically continuing along every path.

Let $B \subset D$ be open free sets, $D$ connected.
The \dfn{free universal monodromy theorem} says that if a free function on $B \subseteq D$ analytically continues along any path in $D$
then it has a unique global analytic continuation \cite{pascoeplush}.
The free gradient of a global germ analytically continues along any path, therefore given a global germ $f: B \rightarrow \mathbb{C},$
the function $\nabla f$ analytically continues uniquely to all of $D.$
Therefore, we see the following key observation.
\begin{observation}
	Let $D$ be an open connected free set with anchor $B$.
	Let $f: B \rightarrow \mathbb{C}$ be a global germ on $D.$
	Let $\gamma \in \pi_1^{\tr}(D).$
	Then, the function
		$f(\gamma'\gamma) - f(\gamma')$
	on $C^{\tr}(D)$ is locally constant.
	If the anchor point $X \in D_n,$ define $c^f_\gamma = \frac{f(\gamma) - f(\gamma_X)}{n}.$
	Note that $c^f_{\gamma}$ depends only on the cohomology class of $\nabla f$ in $H^1_{\tr}(D).$
\end{observation}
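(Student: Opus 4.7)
The plan is to prove the two substantive assertions of the observation separately: first, that $f(\gamma'\gamma) - f(\gamma')$ is locally constant as a function of $\gamma'$ on $C^{\tr}(D)$; second, that $c^f_\gamma$ is a well-defined function on the cohomology group $H^1_{\tr}(D)$. Both steps are driven by the free universal monodromy theorem recalled immediately before the statement, which upgrades $\nabla f$ from a germ on $B$ to a genuine single-valued free function defined on all of $D$.

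For the first claim, I would pass to the local picture on $C^{\tr}(D)$. Near a point $[\gamma']$, the terminal endpoint $\gamma'(1) = Y^{\oplus l}$ varies with $l$ locally constant, and the tracial identity $\tilde f(Y^{\oplus l}) = l\tilde f(Y)$ cancels the normalizing factor $1/l$, so that $f(\gamma')$ locally reduces to $\tilde f(Y)$ evaluated along the branch of the continuation of $f$ selected by $\gamma'$. Likewise $f(\gamma'\gamma)$ is $\tilde f(Y)$ evaluated along the branch selected by $\gamma'\gamma$. These are two possibly distinct branches of a multivalued analytic function on $D$, but their gradients both agree with the single-valued $\nabla f$ on $D$ granted by free universal monodromy. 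Hence the gradient of the difference vanishes and the function is locally constant.

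For the second claim, suppose $\nabla f_1$ and $\nabla f_2$ are cohomologous, so $\nabla f_1 - \nabla f_2 = \nabla h$ for some tracial free function $h$ defined on all of $D$. By passing to a common sub-anchor we may assume both $f_1$ and $f_2$ are defined on a single anchor $B$, so on $B$ we have $f_1 - f_2 = h + C$ for some constant $C$. Because $h$ is single-valued on $D$, analytic continuation along $\gamma$ preserves this identity, and the tracial property of $h$ gives $h(X^{\oplus l})/l = h(X)$. Therefore both $(f_1 - f_2)(\gamma)$ and $(f_1 - f_2)(\gamma_X)$ equal $h(X) + C$, and by the linearity of $c^{(\cdot)}_\gamma$ in the germ we obtain $c^{f_1}_\gamma - c^{f_2}_\gamma = c^{f_1-f_2}_\gamma = 0$.

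The main obstacle I expect is keeping the tracial bookkeeping straight as the sizes of the endpoints of $\gamma'$ and $\gamma'\gamma$ vary in different components of the covering space. Concretely, one must verify carefully that the identification $f(\gamma') = \tilde f(Y)$ is consistent across the direct-sum multiplicities used to concatenate paths, and that perturbations of $\gamma'$ really do induce matching tracial increments on the two branches entering the difference. This comes down to checking that the defining identity $\tr H\cdot \nabla f(X) = Df(X)[H]$ is preserved under analytic continuation in the size-respecting manner built into the definitions of concatenation and of the tracial covering space.
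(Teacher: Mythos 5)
Your proposal is correct and follows the same route the paper takes: the paper derives this observation directly from the free universal monodromy theorem, which makes $\nabla f$ a single-valued free function on all of $D$, so that any two branches of $f$ differ by something with vanishing gradient, and an exact difference $\nabla h$ contributes no monodromy increment. Your write-up simply fills in the tracial normalization details that the paper leaves implicit.
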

We now note that $\phi_g: \gamma \mapsto c^f_\gamma$ is a homomorphism into $\mathbb{C},$ where $g$ is the cohomology class of $\nabla f,$ because
\begin{align*}
	c^f_{\gamma_1\gamma_2}&=f(\gamma_1\gamma_2) - f(\gamma_X) \\
	&=f(\gamma_1\gamma_2) -f(\gamma_1) + f(\gamma_1) - f(\gamma_X)\\
	&=c^f_{\gamma_2} + c^f_{\gamma_1}.
\end{align*}
Moreover, for every nontrivial $\gamma \in \pi_1^{\tr}(D)$ there exists a tracial cohomology class $g\in H^1_{\tr}(D)$ whose analytic continuation along $\gamma$
is nontrivial, and therefore $\pi_1^{\tr}(D)$ is a torsion free abelian group, as the map
$$\prod_{g\in H^1_{\tr}(D)} \phi_g: \pi_1^{\tr}(D) \rightarrow \prod_{g\in H^1_{\tr}(D)}\mathbb{C}$$
is an injective homomorphism.
Note that, as $f(\gamma_X^{\oplus k}\oplus\gamma) = \frac{1}{k+1}f(\gamma)+\frac{k}{k+1}f(\gamma_X)$
for a global germ, we see that $(k+1)\gamma_X^{\oplus k}\oplus\gamma \equiv \gamma$. Therefore, the tracial fundamental group must be divisible.
\begin{theorem} \label{abelian}
	Let $D$ be an anchored free set.
	The tracial fundamental group $\pi_1^{\tr}(D)$ is a countable torsion free divisible abelian group, that is it is isomorphic to a direct sum of copies of $\mathbb{Q}$.
\end{theorem}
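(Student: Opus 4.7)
The proof essentially packages three facts already established in the discussion preceding the theorem --- abelianness, torsion-freeness, and divisibility --- together with two missing pieces: countability of $\pi_1^{\tr}(D)$, and the classical structure theorem for countable torsion-free divisible abelian groups.

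My first step would be to quote the three established facts. Abelianness and torsion-freeness follow immediately from the injective homomorphism
$$\prod_{g \in H^1_{\tr}(D)} \phi_g : \pi_1^{\tr}(D) \to \prod_{g \in H^1_{\tr}(D)}\mathbb{C}$$
displayed above, since the target is a torsion-free abelian group. Divisibility is the content of the remark that $(k+1) \cdot [\gamma_X^{\oplus k}\oplus\gamma] = [\gamma]$, which exhibits an explicit $(k+1)$-st root for every element.

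My second step would be to establish countability. Each slice $D_n$ is an open $C^1$ submanifold of $M_n(\mathbb{C})^d$, hence second countable and locally contractible, so $\pi_1(D_n, X_0)$ is countable for any basepoint $X_0$. If $X$ is the fixed anchor point of size $n$, then every trace equivalence class in $\pi_1^{\tr}(D)$ is represented by a path in some single $D_{kn}$ starting and ending at $X^{\oplus k}$, because a single path necessarily has a well-defined matrix size. Since endpoint-homotopic paths are trace equivalent (as recorded in the text), the natural map
$$\bigsqcup_{k \geq 1} \pi_1(D_{kn}, X^{\oplus k}) \longrightarrow \pi_1^{\tr}(D)$$
is a surjection with countable domain, so $\pi_1^{\tr}(D)$ is countable.

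Finally I would invoke the standard structure result: any torsion-free divisible abelian group carries a unique $\mathbb{Q}$-vector space structure, since divisibility lets us divide by positive integers and torsion-freeness makes the quotient unique, so it is isomorphic to a direct sum of copies of $\mathbb{Q}$ indexed by any $\mathbb{Q}$-basis, which countability forces to be a countable index set. The main obstacle is to be careful in the countability step: one must confirm that the passage from paths to trace classes factors through ordinary endpoint-homotopy inside each $D_{kn}$, and that no subtlety arises from the several ways $X^{\oplus k}$ might be realised as a direct sum. Beyond this bookkeeping, the theorem reduces to elementary group theory combined with observations already recorded.
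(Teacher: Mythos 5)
Your proposal is correct and follows essentially the same route as the paper, which offers no separate proof beyond the preceding discussion (the injectivity of $\prod_g \phi_g$ giving torsion-free abelianness, and the relation $(k+1)\,\gamma_X^{\oplus k}\oplus\gamma \equiv \gamma$ giving divisibility). The one thing you add that the paper leaves entirely implicit is the countability argument via the surjection from $\bigsqcup_k \pi_1(D_{kn}, X^{\oplus k})$, and that argument is sound since each $D_m$ is an open subset of $M_m(\mathbb{C})^d$ and a path cannot change matrix size.
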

The situation of tracial functions on free sets contrasts significantly with that for classical domains.
For example, for connected domains $D \subseteq \mathbb{C}$ any two nonhomotopic paths can be distinguished by analytic continuation. (The uniformization theorem says
that the covering space is conformally equivalent to the disk or the whole complex plane.) Namely, for the two punctured plane, the fundamental group is nonabelian.
The tracial free situation behaves more like the situation for Lie groups, which have abelian fundamental group due to their rigid analytic structure.

Consider the domain
	$$GL=\{X| \det X \neq 0\}.$$
The free gradient of a global germ on $GL$ is a free function on $GL$ and therefore has a Laurent series.
Therefore, the failure of monodromy is witnessed by the function $\log z$ alone.
	$$\pi^{\tr}_1(GL) \cong \mathbb{Q}.$$
One might suspect that the tracial covering space $C^{\tr}(D)$ is $\mathcal M^1$ with the covering map given by $e^Z.$ It is not, as such functions would not have the required
periodicity modulo constants.
In fact, the path
	$$\gamma(t)= \bpm e^{2\pi it} & 1\\
	0	& e^{-2\pi it}
	\epm$$
has no preimage under the exponential map. 
However, the $\tr \log \gamma(t)$ is constant along the path, so the path is equivalent to the trivial path. (In fact, it is not classically homotopy equivalent to the trivial path
and, therefore, the covering space $C^{\tr}(D)$ is not simply connected.)
Heuristically, the tracial covering space adds some points at infinity to $\mathcal M^1_2.$
Generally, the tracial covering space has to be small enough and complicated enough to induce Liouville theorem type behavior inducing the periodicity modulo constants coming from free universal monodromy of the free gradient, that is, to allow the tracial fundamental group to be abelian.

As a group $GL_m(\mathbb{C})$ classically has fundamental group $\mathbb{Z}.$ We leave an exercise to the reader that the tracial fundamental group of invertible block $k$ by $k$ matrices is isomorphic to $\mathbb{Q}.$

Similarly, given $\Lambda \subseteq \mathbb{C}$ a finite set, define the domain
	$$G_\Lambda = \{X| \det X - \lambda \neq 0 \textrm{ for all } \lambda \in \Lambda\}.$$
An exercise shows that 
	$$\pi^{\tr}_1(G_{\Lambda}) \cong \mathbb{Q}^{|\Lambda|}.$$



Recall the notion of \dfn{rank of an abelian group}, the maximal size of a set $z_1, \ldots, z_n$ such that there are no nontrivial relations of the form
$\sum n_iz_i =0,$ where the $n_i$ are integers. (That is, the maximal size of a \dfn{linearly independent} subset.)
\begin{theorem}
	Let $D$ be an anchored free set.
	The following are true:
	\begin{enumerate}
		\item $\dim H^1_{\tr}(D) \neq 0$ if and only if  $\rk \pi_1^{\tr}(D)\neq 0,$
		\item $\dim H^1_{\tr}(D) \leq \rk \pi_1^{\tr}(D)$ whenever both quantities are at most countably infinite. 
	\end{enumerate}
\end{theorem}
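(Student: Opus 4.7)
The plan is to build a period pairing $\Phi : H^1_{\tr}(D) \times \pi_1^{\tr}(D) \to \mathbb{C}$ and deduce both items from its nondegeneracy in the first slot. Given a cohomology class $[g] \in H^1_{\tr}(D)$, I choose a closed representative $g$, use simple connectedness of the anchor $B$ to integrate $g$ to a tracial potential $f : B \to \mathbb{C}$ with $\nabla f = g$, and then extend $\nabla f$ via the free universal monodromy theorem so that $f$ becomes a global germ on $D$. I then define
\[
\phi_{[g]}(\gamma) \;:=\; c^f_\gamma \;=\; \tfrac{1}{n}\bigl(f(\gamma) - f(\gamma_X)\bigr),
\]
which by the observation established just before the theorem depends only on $[g]$ and is a homomorphism in $\gamma$, giving a $\mathbb{C}$-linear map $\Phi^\flat : H^1_{\tr}(D) \to \operatorname{Hom}(\pi_1^{\tr}(D), \mathbb{C})$ sending $[g]$ to $\phi_{[g]}$.

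The key fact is that $\Phi^\flat$ is injective: if $\phi_{[g]} \equiv 0$, then the analytic continuation of $f$ has trivial monodromy around every loop, so $f$ descends to a single-valued tracial function on all of $D$, whence $g = \nabla f$ is exact and $[g] = 0$. Granted this, both conclusions follow by soft linear algebra. For (1), the reverse implication is precisely the ``moreover'' remark preceding the theorem, while the forward implication is immediate from the injectivity of $\Phi^\flat$: any nonzero $[g]$ produces a loop $\gamma$ with $\phi_{[g]}(\gamma) \neq 0$, hence a nontrivial (and, because $\pi_1^{\tr}(D)$ is torsion free, nontorsion) element of $\pi_1^{\tr}(D)$. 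For (2), I pick $\mathbb{C}$-independent $[g_1], \ldots, [g_d] \in H^1_{\tr}(D)$; their images $\phi_{[g_i]}$ are $\mathbb{C}$-independent in $\operatorname{Hom}(\pi_1^{\tr}(D), \mathbb{C})$, so the image of $\Psi : \pi_1^{\tr}(D) \to \mathbb{C}^d$, $\gamma \mapsto (\phi_{[g_i]}(\gamma))_i$, cannot lie in any proper $\mathbb{C}$-hyperplane. Choosing loops $\gamma_1, \ldots, \gamma_d$ whose $\Psi$-images form a $\mathbb{C}$-basis of $\mathbb{C}^d$ yields $\mathbb{Q}$-independent elements of $\pi_1^{\tr}(D)$ (any $\mathbb{Q}$-relation would pull back to a $\mathbb{C}$-relation in $\mathbb{C}^d$), proving $\rk \pi_1^{\tr}(D) \geq d$. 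Letting $d = \dim H^1_{\tr}(D)$ in the finite case, or letting $d$ run over all finite values in the countably infinite case, gives the inequality.

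The main obstacle is producing the tracial potential on $B$ in the first paragraph. The closed condition $\tr K \cdot Dg(X)[H] = \tr H \cdot Dg(X)[K]$ says precisely that the complex $1$-form $\omega_n := \tr\!\bigl(g(X) \cdot dX\bigr)$ on each $B_n$ is closed, so simple connectedness yields a primitive $f_n$. Traciality under direct sums follows from $g(X \oplus Y) = g(X) \oplus g(Y)$ combined with the block structure of $dX$, while unitary invariance follows from $g(U^*XU) = U^*g(X)U$, which renders each $\omega_n$ invariant under the conjugation action of $U(n)$, after normalizing $f_n$ with a consistent choice of base points across orbits.
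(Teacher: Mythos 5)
Your proposal is correct and rests on the same two pillars as the paper's proof: the bilinear period pairing $\phi_{[g]}(\gamma)$ from the observation preceding the theorem, and its nondegeneracy in the cohomology slot (vanishing of all periods forces exactness via monodromy). The only difference is cosmetic — the paper runs the linear algebra in (2) in the transposed direction, taking a maximal independent set of loops and deriving a singular period matrix from a putative extra cohomology class, whereas you start from independent classes and extract independent loops from the image of $\Psi$; you also spell out the integration of a closed form to a tracial potential on the anchor, which the paper leaves implicit.
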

\begin{proof}
	(1) follows from the fact that if $\dim H^1_{\tr}(D) =0$ then monodromy holds.

	(2) Without loss of generality, suppose $\rk \pi_1^{\tr}(D)$ is finite.
	Choose a maximal linearly independent set in $\pi_1^{\tr}(D)$ denoted $\gamma_1, \gamma_2, \ldots, \gamma_N.$
	Just suppose there were a linearly independent set
		$g_1, \ldots, g_{N+1}$
	in $H^1_{\tr}(D).$
	The matrix 
		$[\phi_{g_{i}}(\gamma_j)]_{i,j}$
	is singular. Let $(\alpha_1,\ldots, \alpha_{N+1})$ be the kernel vector.
	Write $g= \sum \alpha_i g_i.$
	Note,
		$g(\gamma_j) = 0$ for all $j$
	and therefore is equal to $0,$ which is a contradiction.
\end{proof}
The Hurewicz theorem says that the classical first homology group is equal to the abelianization of the fundamental group.
That is, for a path-connected manifold $F,$ $\pi_1(F)/[\pi_1(F),\pi_1(F)] \cong H_1(F).$

Let $D$ be an anchored free set such that each $D_n$ is nonempty.
Let $B$ be an anchor  such that each $B_n$ is nonempty.
Let $X\in D_1$ be our base point.
There is a \dfn{natural gradation} of $\pi^{\tr}_1(D)$, define $\pi^{\tr}_1(D)_n$ to be those equivalence classes of paths inside $D_n.$
Note that $\pi^{\tr}_1(D)_n$ is isomorphic to a quotient of a subgroup of $\pi_1(D_n)$, and because it is abelian by Theorem \ref{abelian}, a quotient of a subgroup of $H_1(D_n).$
Moreover, $\pi^{\tr}_1(D)_n$ is a subgroup of $\pi^{\tr}_1(D)_{nm}.$
Therefore, $\pi^{\tr}_1(D)$ is the direct limit of  $\pi^{\tr}_1(D)_{n!}$ and under isomorphism, a series of quotients of $H_1(D_{n!}).$

The natural gradation gives a ``coarse approximation" $\pi^{\tr}_1(D)$ as a limit of quotients of $H_1(D_{n!})$ which can be helpful in computing qualitative properties 
of $\pi^{\tr}_1(D).$
For example, if $\rk H_1(D_{n!})$ is uniformly bounded by $k,$ then $\rk \pi^{\tr}_1(D) \leq k.$

We define the \dfn{integral tracial fundamental group} $\pi_1^{\tr}(D)_\mathbb{Z}$ to be $\sum n\pi^{\tr}_1(D)_{n}.$
We define the \dfn{integral tracial cohomology group} $H^1_{\tr}(D)_\mathbb{Z}$ to be the set $g\in H^1_{\tr}(D)$ such that the range of $\phi_g(\gamma)$ is contained in $\mathbb{Z}$
for $\gamma \in \pi_1^{\tr}(D)_\mathbb{Z}.$
We see the following characterization of divisors.

\begin{theorem}
	Let $D$ be an anchored free set such that each $D_n$ is nonempty.
	Let $B$ be an anchor  such that each $B_n$ is nonempty.
	Let $g: D \rightarrow \mathcal{M}^d$ be an analytic free function.
	There exists a nonsingular determinantal function $f$ such that $\divisor f = g$
	if and only if the tracial cohomology class of $\frac{1}{2\pi i}g$ is contained in $H^1_{\tr}(D)_\mathbb{Z}.$
\end{theorem}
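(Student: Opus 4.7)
The plan is to pass between a determinantal $f$ and a tracial logarithm $F = \log f$ using free universal monodromy, with the integral cohomology condition playing the role of a cocycle condition for $\exp F$ to descend from the tracial covering space $C^{\tr}(D)$ back down to $D$.

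For the forward direction, I would start with a nonsingular determinantal $f:D\to\mathbb{C}$ satisfying $\divisor f=g$. Since $B$ is simply connected, I can choose a single-valued tracial branch $F$ of $\log f$ on $B$, automatically satisfying $\nabla F = g$. Because $g$ is analytic on all of $D$, free universal monodromy (applied to the free function $\nabla F$) continues $F$ along every path, producing a tracial function on $C^{\tr}(D)$. For any loop $\gamma_0\in\pi^{\tr}_1(D)_n$ based at $X^{\oplus n}$, single-valuedness of the determinantal $f$ on $D_n$ forces the analytic-continuation change of $F$ at $X^{\oplus n}$ to lie in $2\pi i\mathbb{Z}$. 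Unpacking the $1/l$ normalization in the definition of $\phi_g$ together with the tracial scaling $F(X^{\oplus n}) = nF(X)$, that change works out to $n\,\phi_g(\gamma_0)$. Hence $\phi_{g/(2\pi i)}(n\gamma_0)\in\mathbb{Z}$ for every $\gamma_0\in\pi_1^{\tr}(D)_n$, and since $\pi_1^{\tr}(D)_\mathbb{Z}=\sum_n n\pi_1^{\tr}(D)_n$, this is exactly the assertion that $g/(2\pi i)\in H^1_{\tr}(D)_\mathbb{Z}$.

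For the reverse direction, given an analytic closed $g$ with the integrality condition, I would first integrate $g$ on the simply connected anchor $B$ to produce a tracial $F_0:B\to\mathbb{C}$ with $\nabla F_0=g$, then use free universal monodromy to extend to a single-valued tracial function $F$ on $C^{\tr}(D)$. Define $f:D\to\mathbb{C}$ by $f(Y)=\exp F(\tilde Y)$ for any lift $\tilde Y\in C^{\tr}(D)$ of $Y\in D_n$. Two lifts of $Y$ differ by the action of some $\gamma_0\in\pi_1^{\tr}(D)_n$, and by the same normalization computation used above, the ambiguity in $F(\tilde Y)$ is exactly $n\,\phi_g(\gamma_0)$. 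Applying the hypothesis to $n\gamma_0\in\pi_1^{\tr}(D)_\mathbb{Z}$ gives $n\,\phi_g(\gamma_0)\in 2\pi i\mathbb{Z}$, so $f$ is well-defined. The tracial identity $F(\tilde Y_1\oplus\tilde Y_2)=F(\tilde Y_1)+F(\tilde Y_2)$ on the covering then implies $f(Y_1\oplus Y_2)=f(Y_1)f(Y_2)$; nonvanishing of $\exp$ gives nonsingularity; analyticity is inherited from $F$; and $\divisor f=\nabla\log f=\nabla F=g$.

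The main obstacle is keeping the normalization conventions straight: the factor $1/l$ in the definition of a global germ at an endpoint $Y^{\oplus l}$ and the tracial scaling $F(X^{\oplus n})=nF(X)$ have to line up so that the natural condition on $D_n$, namely that $\exp F$ have monodromy in $2\pi i\mathbb{Z}$ around loops, translates exactly to integrality of $\phi_{g/(2\pi i)}$ on $n\pi_1^{\tr}(D)_n$. This is precisely why the correct cohomological condition uses $\pi_1^{\tr}(D)_\mathbb{Z}$ rather than all of $\pi_1^{\tr}(D)$, which would be strictly stronger and inconsistent with the examples $GL$ and $G_\Lambda$ discussed earlier, where $\pi_1^{\tr}\cong\mathbb{Q}^{|\Lambda|}$ while the divisor obstruction is only the $\mathbb{Z}^{|\Lambda|}$ sublattice graded by matrix size.
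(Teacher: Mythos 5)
Your proposal is correct and follows essentially the same route as the paper: integrate $g$ to a tracial logarithm on the anchor, continue it via free universal monodromy, and observe that single-valuedness of the exponential on $D_n$ is exactly integrality of $\phi_{g/2\pi i}$ on $n\,\pi_1^{\tr}(D)_n$, hence on $\pi_1^{\tr}(D)_\mathbb{Z}$. The paper's proof is the same argument with the two directions in the opposite order and less explicit bookkeeping of the $1/l$ normalization.
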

\begin{proof}
	First suppose the tracial cohomology class of $\frac{1}{2\pi i}g$ is contained in $H^1_{\tr}(D)_\mathbb{Z}.$
	Without loss of generality, $0 \in B.$
	We may define a tracial free function $f: B \rightarrow \mathbb{C}$ such that on $B,$ $\divisor e^f = g.$
	Let $\gamma_1, \gamma_2$ be two different paths taking $0^{\oplus n}\in B_n$ to $Y \in B_n$
	So $\gamma= \gamma_1^{-1}\gamma_2$ is a path essentially taking $0$ to $0.$ Note $\gamma \in \pi^{\tr}_1(D)_n.$
	So $f(\gamma)- f(\gamma_0) = \phi_g(\gamma).$
	By assumption $n\phi_g(\gamma) \in 2\pi i\mathbb{Z}.$
	Therefore, now viewing $\gamma$ as a path essentially taking $0$ to $0^{\oplus n}$ we see $f(\gamma)-nf(0) \in 2\pi i\mathbb{Z}.$
	Thus, $e^{f(\gamma)} = e^{nf(0)},$ so we are done.
	
	To see the converse, suppose not. Then there exists a path $\gamma$ in $D_n$ essentially taking $0$ to $0$
	such that $n(f(\gamma)- f(\gamma_0)) = n\phi_g(\gamma) \notin 2\pi i\mathbb{Z}.$
	Therefore, now viewing $\gamma$ a path essentially taking $0$ to $0^{\oplus n}$ we see that that
	$e^{f(\gamma)} \neq e^{nf(0)}.$
\end{proof}
We could take the approach of defining close and exact forms with respect to divisors of  determinantal free functions. Similarly, we could define a fundamental group via paths which are distinguished
by analytic continuation. One obtains $\pi^{\det}_1(D) = \pi_1^{\tr}(D)$ and $H^1_{\det}(D) = H^1_{\tr}(D) / (2\pi i)H^1_{\tr}(D)_\mathbb{Z}.$
The above theorem says that a $g: D \rightarrow \mathcal{M}^d$ is a principal divisor if and only if its cohomology class in $H^1_{\det}(D)$ is zero. On face, the structure of the first determinantal cohomology group is
significantly more complicated than the first tracial cohomology group.




\section{The automorphisms of a module repecting a homomorphism}	
	We can clear away the superfluous data from our proof of the commutativity of the tracial fundamental group to obtain the following remarks 
	on the level of modules. These remarks provide a roadmap for potential generalizations of the theory laid out above.

	Let $F$ be an $R$-module. Let $E$ be a submodule of $F.$
	We define the automorphisms of $F$ over $E,$
	denoted $\Aut(F, E)$ to be the group of automorphisms
	of $F$ fixing $E.$ 
	Let $H$ be an $R$-module and
	$\varphi: F\rightarrow H$ be a module homomorphism.
	We define the automorphisms of $F$ over $E$ respecting
	$\varphi,$ denoted $\Aut(F, E, \varphi)$
	to be the subgroup of elements $\gamma\in\Aut(F, E)$
	such that $\ker \varphi$ is fixed and
	$\varphi(\gamma \cdot f) = \varphi(f).$

	For example, let $F$ be the space of multivalued-analytic functions on $\mathbb{C}\setminus\{0\}$ of the form
		$$c_{\log{}} \log{z} + \sum^{\infty}_{n=-\infty} c_n z^n.$$
	Let $E$ be the subspace such that $c_{\log{}} = 0.$ (Therefore, all the functions in $E$ are single-valued.)
	Let $H=E.$ The map $\varphi(f) = f'$ takes $F$ to $H.$
	The action of an automorphism $\gamma$ in $\Aut(F, E)$ is determined by its action on $\log z.$
	Namely, $\gamma \log z = \log z + e$ for some function $e \in E.$
	The action of an automorphism $\gamma$ in $\Aut(F, E, \varphi)$ needs to satisfy the extra property that 
	$\varphi(\gamma \cdot \log z) = \varphi(\log z).$ So, $1/z + e' = 1/z$ and consequently, $e'=0$, so $e$ is constant.
	By our inspection,  $\Aut(F, E, \varphi) \cong \mathbb{C}.$

	\emph{In our applications, $F$ was the space of germs based at some point $X$ which essentially analytically continue along every path, $E$
	was the subspace of those corresponding to globally-defined analytic functions, $H$ was a space of analytic functions on a domain satisfying monodromy
	and $\varphi$ was a derivative-like map, either the free gradient or the divisor map. Specifically, $H$ was the space of analytic free functions on some free set, which are always
	monodromic by the free universal monodromy theorem. Naturally, we want to consider the subgroup of $\Aut(F, E, \varphi)$ corresponding
	to analytically continuing along paths essentially taking  $X$ to $X,$ the correct analogue of the fundamental group.
	We will show that $\Aut(F, E, \varphi)$ is always abelian and in fact isomorphic to the additive group of module homomorphisms 
	from $F/E$ to $\ker \varphi.$ In the analytic continuation set up, $F/E$ corresponds to the first cohomology group and $\ker \varphi$
	will be the space of locally constant functions, with the conclusion being that for various natural classes of functions on tuples of matrices that the corresponding
	fundmental group is abelian.}

	Let $A, B$ be $R$-modules.
	We define $T(A, B)$ to be the $R$-module of module homomorphisms from
	$A$ to $B.$ The following is analogous to the statement that the fundamental group is abelian.
	\begin{theorem}
		Let $F, H$ be $R$-modules. Let $E$ be a submodule of $F.$
		Let $\varphi: F\rightarrow H$ be a module homomorphism. Then,
			$$\Aut(F, E, \varphi)\cong T(F/E, \ker \varphi).$$
	\end{theorem}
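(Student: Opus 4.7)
The plan is to exhibit an explicit bijection and verify it respects the group operations. Given $\gamma \in \Aut(F, E, \varphi)$, I would define $\delta_\gamma : F \to F$ by $\delta_\gamma(f) = \gamma(f) - f$. Two observations are immediate: since $\gamma$ fixes $E$ pointwise, $\delta_\gamma$ vanishes on $E$ and hence descends to an $R$-linear map $F/E \to F$; and since $\varphi(\gamma f) = \varphi(f)$, the image of $\delta_\gamma$ lies in $\ker \varphi$. Thus $\Phi(\gamma) := \delta_\gamma$ is a well-defined element of $T(F/E, \ker \varphi)$.

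For the inverse direction, given $\tau \in T(F/E, \ker \varphi)$, I would set $\gamma_\tau(f) := f + \tau(f + E)$. The map $\gamma_\tau$ is $R$-linear, fixes $E$ pointwise (since $\tau$ vanishes on the zero coset), and satisfies $\varphi \gamma_\tau = \varphi$ (since $\tau$ lands in $\ker \varphi$). The key point is that $\gamma_\tau$ is invertible with inverse $\gamma_{-\tau}$: because $\tau(f + E) \in \ker \varphi \subseteq E$, one has $\gamma_\tau(f) + E = f + E$, so $\tau(\gamma_\tau(f) + E) = \tau(f + E)$, and the two compositions telescope to the identity. The inclusion $\ker \varphi \subseteq E$ is satisfied in all of the paper's applications (and in the motivating example), where $\ker \varphi$ consists of locally constant functions, which are globally defined and thus lie in $E$. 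This defines $\Psi(\tau) := \gamma_\tau \in \Aut(F, E, \varphi)$.

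Checking $\Phi \circ \Psi$ and $\Psi \circ \Phi$ are the identity is routine substitution: $\delta_{\gamma_\tau}(f+E) = \gamma_\tau(f) - f = \tau(f+E)$, and $\gamma_{\delta_\gamma}(f) = f + \delta_\gamma(f + E) = \gamma(f)$. Finally, I would confirm that $\Phi$ converts composition to addition by the telescoping computation
\[ \Phi(\gamma_1 \gamma_2)(f + E) = \gamma_1\gamma_2(f) - f = \bigl(\gamma_1\gamma_2(f) - \gamma_2(f)\bigr) + \bigl(\gamma_2(f) - f\bigr); \]
since $\gamma_2(f) - f \in \ker \varphi \subseteq E$, we have $\gamma_2(f) + E = f + E$, and the first summand becomes $\delta_{\gamma_1}(f + E)$, giving $\Phi(\gamma_1\gamma_2) = \Phi(\gamma_1) + \Phi(\gamma_2)$.

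The main obstacle, really the only delicate point, is justifying invertibility of $\gamma_\tau$ and the simplification $\gamma_2(f) + E = f + E$ in the composition computation; both of these rest on $\ker \varphi \subseteq E$. With this hypothesis explicit (or implicit, as in the paper's setup), the theorem is a direct algebraic identity paralleling the cocycle classification of abelian extensions.
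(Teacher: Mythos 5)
Your proof is correct and follows essentially the same route as the paper's: both send $\gamma$ to the coboundary-style map $f\mapsto \gamma f - f$ (the paper uses the opposite sign), check it kills $E$ and lands in $\ker\varphi$, verify the telescoping identity to get a homomorphism, and invert it via $\tau\mapsto(f\mapsto f+\tau(f+E))$. The one substantive difference is how the delicate step is justified: you invoke $\ker\varphi\subseteq E$ both to telescope $\gamma_1\gamma_2(f)-\gamma_2(f)$ down to $\delta_{\gamma_1}(f+E)$ and to see that $\gamma_\tau$ is invertible, whereas the paper instead uses the clause built into its definition of $\Aut(F,E,\varphi)$ that each automorphism fixes $\ker\varphi$ pointwise (so $\eta(f-\gamma f)=f-\gamma f$ directly). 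You are right that $\ker\varphi\subseteq E$ is genuinely needed somewhere — without it the candidate inverse $\gamma_\tau$ need not fix $\ker\varphi$, so surjectivity (which the paper asserts without checking) would fail — and the paper only records that hypothesis in the Observation immediately afterward; making it explicit, as you do, is a small improvement in rigor over the printed argument.
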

	\begin{proof}
		Given $\gamma \in \Aut(F, E, \varphi)$ define the
		map
			$$\phi_\gamma f = f-\gamma f.$$
		Note that
		$$\varphi(f-\gamma f) = \varphi(f)-\varphi(\gamma f)=0,$$
		and therefore
			$\phi_\gamma f \in \ker \varphi.$
			
		Suppose $e \in E.$
		Note that $\phi_\gamma e = e-\gamma e =0.$
		Therefore, we can view
		$\phi_{\gamma}$ as a map from $F/E$ to $\ker \varphi.$
		That is $\phi_{\gamma}\in T(F/E, \ker \varphi).$

		Because $\phi_{\gamma}f \in \ker \varphi,$ we see that
		$\eta \phi_{\gamma}f = \phi_{\gamma}f.$ 
		Now $$\phi_{\eta\gamma}f = f- \eta\gamma f
		= f-\eta f + \eta(f - \gamma f)
		= f-\eta f + f - \gamma f
		= \phi_{\eta}f+ \phi_{\gamma}f.$$
		Therefore, $\phi_{\eta\gamma} = \phi_{\eta} + \phi_{\gamma}.$
		That is, $\phi: \Aut(F, E, \varphi) \rightarrow T(F/E, \ker \varphi)$ is a homomorphism.
		
		To see that $\phi$ is injective, suppose $\phi_{\gamma}f = 0$ for all $f.$
		So $f-\gamma f = 0$ for all $f,$ and therefore $\gamma$ is the identity map.
		
		To see that $\phi$ is surjective, let $T \in T(F/E, \ker \varphi).$
		Define $\gamma_T f = f + Tf.$ Note $\gamma_T \in \Aut(F, E, \varphi).$
	\end{proof}

	The global germs modulo global sections are equivalent to the cohomology group because of the following elementary fact.
	\begin{observation}
		Let $F, H$ be $R$-modules. Let $E$ be a submodule of $F.$
		Let $\varphi: F\rightarrow H$ be a module homomorphism such that $\ker \varphi \subseteq E.$
		Then,
			$$F/E \cong \phi(F)/\phi(E).$$
	\end{observation}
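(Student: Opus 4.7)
The plan is to exhibit the isomorphism as an instance of the first isomorphism theorem applied to the composition
$$F \xrightarrow{\varphi} \varphi(F) \xrightarrow{\pi} \varphi(F)/\varphi(E),$$
where $\pi$ is the canonical quotient map. Both maps in the composition are surjective $R$-module homomorphisms, so their composition, call it $\Phi$, is a surjective $R$-module homomorphism from $F$ onto $\varphi(F)/\varphi(E)$.

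The whole content of the observation then reduces to identifying $\ker \Phi$ with $E$. First I would verify $E \subseteq \ker \Phi$, which is immediate: if $e \in E$ then $\varphi(e) \in \varphi(E)$ and so $\pi(\varphi(e)) = 0$. For the reverse inclusion, suppose $f \in \ker \Phi$. Then $\varphi(f) \in \varphi(E)$, so there exists $e \in E$ with $\varphi(f) = \varphi(e)$, i.e.\ $\varphi(f - e) = 0$, i.e.\ $f - e \in \ker \varphi$. This is the only step that uses the hypothesis $\ker \varphi \subseteq E$: it yields $f - e \in E$ and hence $f \in E$.

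Once $\ker \Phi = E$ is established, the first isomorphism theorem delivers the desired isomorphism $F/E \cong \varphi(F)/\varphi(E)$, completing the proof. There is no genuine obstacle here; the only subtlety is noticing that the hypothesis $\ker \varphi \subseteq E$ is exactly what is needed to prevent elements of $\ker \varphi \setminus E$ (if any existed) from lying in $\ker \Phi$ without already lying in $E$, and thus the hypothesis is used in one place only, to close up the reverse inclusion above.
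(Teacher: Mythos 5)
Your proof is correct. The paper states this as an elementary fact without supplying an argument, and your route — applying the first isomorphism theorem to the composite $F \xrightarrow{\varphi} \varphi(F) \to \varphi(F)/\varphi(E)$ and checking that its kernel is exactly $E$ (with the hypothesis $\ker\varphi \subseteq E$ used precisely once, to close the reverse inclusion) — is the standard and surely intended justification.
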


\section{The full fundamental group}

Of course, the results of Section \ref{tracialfund} rely heavily on a differential structure. We now give a construction of a perhaps larger fundamental group which works for not so smooth domains, including commuting
tuples of matrices and other varieties.

Let $D$ be a connected free set.
Consider consider the equivalence relation on paths essentially taking $X$ to $Y$ generated by the relation $\gamma^{\oplus n} = \gamma$ and those induced by fixed endpoint homotopy within $D$.
We define the \dfn{fundamental group} $\pi_1(D)$ to be the set of paths essentially taking $X$ to $X$ up to equivalence. Similarly, we may define the covering space $C(D).$

Note that $\pi^{\tr}_1(D)$ is a quotient of $\pi_1(D).$

\begin{theorem}\label{fullfundamental}
	Let $D$ be a connected free set. Then, $\pi_1(D)$ is abelian and divisible.
\end{theorem}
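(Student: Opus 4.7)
The plan is to combine two algebraic facts about loops—the interchange law between concatenation and direct sum, and a ``swap lemma'' coming from the connectedness of the unitary stabilizer of a diagonal block—to run an Eckmann–Hilton style argument producing both commutativity and divisibility.

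The interchange law
$$(\alpha_1\cdot\alpha_2)\oplus(\beta_1\cdot\beta_2) \;=\; (\alpha_1\oplus\beta_1)\cdot(\alpha_2\oplus\beta_2)$$
is an equality of paths, immediate from the pointwise definition of $\oplus$. The swap lemma reads: fix the basepoint $X\in D_n$ and let $\sigma\in S_k$ act by permuting the $k$ blocks of size $n$ in $X^{\oplus k}$. The associated permutation unitary $U_\sigma\in U(kn)$ commutes with $X^{\oplus k}$ (its block entries are $0$ and $I_n$) and so lies in the unitary group of the commutant of $X^{\oplus k}$, a finite-dimensional $*$-subalgebra of $M_{kn}(\mathbb{C})$ whose unitary group is connected. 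Picking a path $W_s$ from $I$ to $U_\sigma$ inside this unitary group yields a homotopy $H(s,t)=W_s^{\ast}\gamma(t)W_s$ in $D_{kn}$ (closure of $D$ under unitary conjugation), with constant endpoints $W_s^{\ast} X^{\oplus k}W_s=X^{\oplus k}$. Hence $[\gamma]=[U_\sigma^{\ast}\gamma U_\sigma]$ in $\pi_1(D_{kn},X^{\oplus k})$; in particular, the single-coordinate embeddings
$$A_i:=[\,e^{\oplus(i-1)}\oplus\alpha\oplus e^{\oplus(k-i)}\,]\in\pi_1(D_{kn},X^{\oplus k}),\qquad i=1,\ldots,k,$$
all coincide for any loop $\alpha$ at $X$, where $e=e_X$ is the constant loop.

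For commutativity, take loops $\alpha,\beta$ at $X$. In $\pi_1(D_{2n},X^{\oplus 2})$ the swap lemma gives $[\alpha\oplus e]=[e\oplus\alpha]$ and $[\beta\oplus e]=[e\oplus\beta]$, while interchange gives the disjoint-coordinate commutation $[\alpha\oplus e]\cdot[e\oplus\beta]=[e\oplus\beta]\cdot[\alpha\oplus e]$. Chaining,
$$[\alpha\oplus e][\beta\oplus e]=[\alpha\oplus e][e\oplus\beta]=[e\oplus\beta][\alpha\oplus e]=[\beta\oplus e][\alpha\oplus e],$$
and interchange rewrites the extremes as $[(\alpha\beta)\oplus e]=[(\beta\alpha)\oplus e]$. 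Squaring, using $[\gamma\oplus e]^2=[\gamma\oplus e][e\oplus\gamma]=[\gamma\oplus\gamma]=[\gamma^{\oplus 2}]$ (again by the swap lemma and interchange), produces $[(\alpha\beta)^{\oplus 2}]=[(\beta\alpha)^{\oplus 2}]$ in $\pi_1(D_{2n})$. The defining relation $\delta^{\oplus 2}\equiv\delta$ of $\pi_1(D)$ then collapses both sides to $[\alpha\beta]$ and $[\beta\alpha]$, establishing commutativity.

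For divisibility, fix $[\alpha]\in\pi_1(D)$ realized at $X$ and $k\geq 1$. By the swap lemma, $A_1,\ldots,A_k$ all equal a common class $A\in\pi_1(D_{kn})$, and they commute pairwise by disjoint-coordinate interchange. Iterated interchange gives $A_1\cdot A_2\cdots A_k=[\alpha^{\oplus k}]$, so $A^k=[\alpha^{\oplus k}]\equiv[\alpha]$ in $\pi_1(D)$, producing the required $k$-th root.

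The principal subtlety is verifying the swap lemma: the commutant of $X^{\oplus k}$ contains the block permutation matrices (immediate) and the unitary group of any finite-dimensional $*$-subalgebra of $M_{kn}(\mathbb{C})$ is connected (standard, via the exponential map on self-adjoints). These topological inputs replace the analytic-continuation machinery of Section \ref{tracialfund}; once they are in hand, the Eckmann–Hilton arithmetic above runs without analytic hypotheses on $D$.
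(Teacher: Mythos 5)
Your proof is correct and follows essentially the same route as the paper: an Eckmann--Hilton argument built from the interchange law for $\oplus$ and concatenation, together with a fixed-endpoint homotopy swapping identical diagonal blocks by conjugating along a path of unitaries commuting with $X^{\oplus k}$ (the paper realizes the $k=2$ case explicitly with the rotation $\bpm \cos\theta & \sin\theta \\ -\sin\theta & \cos\theta\epm$, which is exactly a path in the unitary group of the commutant of $X\oplus X$). Two points where you go beyond the printed proof: you spell out why $[(\alpha\beta)\oplus e]=[(\beta\alpha)\oplus e]$ descends to $\pi_1(D)$ via squaring and the relation $\delta^{\oplus 2}\equiv\delta$, and you actually prove divisibility (via $A^k=[\alpha^{\oplus k}]\equiv[\alpha]$), which the paper asserts in the statement but does not argue in its proof. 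One cosmetic caveat: for non-self-adjoint $X$ the commutant is a priori only an algebra, not a $*$-algebra; but any unitary $U$ in it has $U^*=U^{-1}$ in it as well, so the unitaries of the commutant coincide with those of the $C^*$-algebra $\mathcal{A}\cap\mathcal{A}^*$ and your connectedness claim stands (or one can reduce to transpositions and reuse the rotation).
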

\begin{proof}
Let $\gamma \in \pi_1(D).$
First note, 
	$$\gamma \oplus \gamma = (\gamma \oplus \gamma_X)(\gamma_X \oplus \gamma).$$
Moreover, 
	$$\gamma \oplus \gamma_X = \gamma_X \oplus \gamma$$
because 
	$$\hat{\gamma}(t,\theta) = \bpm \cos \theta & \sin \theta \\ -\sin \theta & \cos \theta \epm(\gamma \oplus \gamma_X)\bpm \cos \theta & \sin \theta \\ -\sin \theta  & \cos \theta \epm^*$$
gives a fixed endpoint homotopy between the two paths.
So, $$\gamma \oplus \gamma = (\gamma \oplus \gamma_X)^2 = (\gamma_X \oplus \gamma)^2.$$
Therefore, 
	\begin{align*}
		\gamma_1\gamma_2 &= (\gamma_1^2 \oplus \gamma_X)(\gamma_X \oplus \gamma_2^2 ) \\
		&= \gamma_1^2 \oplus \gamma_2^2 \\
		&= (\gamma_2^2 \oplus \gamma_X)(\gamma_X \oplus \gamma_1^2) \\
		&= \gamma_2\gamma_1. 
	\end{align*}
Thus, $\pi(D)$ is abelian.
\end{proof}

We note that free univeral mondromy, as in \cite{pascoeplush} is essentially the statement that $\gamma \oplus \gamma_X = \gamma_X \oplus \gamma$ and therefore holds for thin sets on which there is an appropriate
notion of analytic continuation. (When we analytically continue along the two paths, we much get the same result as they are homotopically equivalent.)
That is, defining the group $\pi_1^{free}(D)$ to be the full fundamental group modulo the global germs of analytic free functions to be the \dfn{free fundamental group}, we see that $\pi_1^{free}(D)$ is trivial.
\begin{observation}
	$\pi_1^{free}(D)$ is the trivial group.
\end{observation}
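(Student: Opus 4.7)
The plan is to verify that every loop in the full fundamental group acts trivially on every global germ, whence the quotient $\pi_1^{free}(D)$ is trivial. The argument is essentially a repackaging of the free universal monodromy theorem of \cite{pascoeplush} through the newly introduced quotient.

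First I would unwind the definitions. The full fundamental group $\pi_1(D)$ consists of equivalence classes of paths essentially taking the anchor point $X$ to $X$, modulo the identifications $\gamma^{\oplus n} = \gamma$ and fixed-endpoint homotopies within $D$. A global germ is a free function on an anchor $B \subseteq D$ which analytically continues along every path in $D$. Reading the phrase ``modulo the global germs of analytic free functions'' literally, the free fundamental group $\pi_1^{free}(D)$ is the quotient of $\pi_1(D)$ by the relation that identifies two loops $\gamma_1, \gamma_2$ whenever every global germ takes the same value after analytic continuation along $\gamma_1$ as along $\gamma_2$.

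Next I would apply free universal monodromy directly. The theorem from \cite{pascoeplush} states that any free function on an open subset $B \subseteq D$ of a connected free set $D$ that admits analytic continuation along every path in $D$ possesses a unique single-valued global analytic extension to all of $D$. A global germ satisfies this hypothesis by definition, so its continuation to any $Y \in D$ depends only on $Y$ and not on the path taken. In particular, for any loop $\gamma$ essentially taking $X$ to $X$ and any global germ $f$, analytic continuation of $f$ along $\gamma$ recovers its original value at $X$, matching the trivial loop $\gamma_X$. Hence every $\gamma \in \pi_1(D)$ represents the same class as $\gamma_X$ in $\pi_1^{free}(D)$, and the group collapses to the identity.

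I do not foresee any substantive obstacle, as all the analytic content is already in \cite{pascoeplush}. The only point of minor care is that the ``essentially taking $X$ to $X$'' framework, in which loops may shuttle between different block sizes $X^{\oplus k}$ and $X^{\oplus l}$, does not sneak in extra monodromy; but each $X^{\oplus k}$ is a genuine point of $D_{nk}$, and free universal monodromy applies uniformly across matrix sizes, so the direct-sum stabilization is absorbed automatically.
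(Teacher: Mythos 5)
Your proposal is correct and takes essentially the same route as the paper: the observation is a direct consequence of the free universal monodromy theorem of \cite{pascoeplush}, which forces every global germ of a free function to be single-valued, so no loop can be distinguished from the trivial one in the quotient. The paper's surrounding text justifies the observation in exactly this way, merely adding the remark that free universal monodromy itself amounts to the homotopy $\gamma \oplus \gamma_X = \gamma_X \oplus \gamma$ used in Theorem \ref{fullfundamental}.
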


The more ``purely topological" fundamental group governs analytic continuation in a variety of contexts, such as tracial free functions, determinantal free functions, free functions, possibly permanental analogues,  and the local limits of trace polynomials as in \cite{Jekel}.
Namely, any class of functions on a free set such that $f(X\otimes I) = f(X) \otimes I$ or $f(X\otimes I) = f(X)$ will have analytic continuation governed by the fundamental group.

\section{Questions}

We close with some questions and conjectures:
\begin{enumerate}
	\item Does $\dim H^1_{\tr}(D) = \rk \pi_1^{\tr}(D)?$
	\item How do we compute the tracial fundamental group and cohomology groups?
	The prototype example would be a set of the form $\{\det p \neq 0\}$ for some free polynomial $p$. We conjecture that $\dim H^1_{\tr}(D),$ $\rk \pi_1^{\tr}(D)$ are both finite in that case.
	\item Develop deformation-type criteria for two domains to be ``homotopic." For example, sufficient conditions for $\pi_1^{\tr}(D_1) = \pi_1^{\tr}(D_2).$
	\item Does nontrivial $H^1_{\tr}(D)$ imply $C^{\tr}(D)$ disconnected?
	\item Can $\pi_1^{\tr}(D)_\mathbb{Z}$ contain a copy of $\mathbb{Q}?$
	\item The proof of free universal monodromy from \cite{pascoeplush} works in somewhat more generality than open free sets.
	The module-based approach says that any family of functions with a derivative-like map into free functions suggests that 
	for thin sets, such as commuting tuples of operators, the tracial fundamental group should have the same structure.
	\item Similar results for functions which are permutation invariant rather than unitarially invariant, such as the permanent of free polynomials should hold with the right notion of analytic continuation.
	Specifically, instead of using open sets along a path, one would need permuation invariant open sets along a path. (One sort of propagates unitary invariance for free, because the unitary group is connected.) Then, one should obtain free universal monodromy and commutativity of the corresponding analog of the tracial fundamental group.
	\item Is $\pi(D)$ a direct sum of copies of $\mathbb{Q}?$
	\item Is $\pi(D)\cong \pi_1^{\tr}(D)?$
\end{enumerate}

\bibliography{references}
\bibliographystyle{plain}

\end{document}